\begin{document}

\newtheorem{theorem}{Theorem}
\newtheorem{lemma}[theorem]{Lemma}
\newtheorem*{proposition}{Proposition}
\newtheorem{corollary}[theorem]{Corollary}
\newtheorem{conjecture}[theorem]{Conjecture}
\newtheorem{question}[theorem]{Question}
\newtheorem{problem}[theorem]{Problem}
\newtheorem*{claim}{Claim}
\newtheorem*{criterion}{Criterion}
\newtheorem*{stability_thm}{Theorem~\ref{stability_theorem}}

\theoremstyle{definition}
\newtheorem{definition}[theorem]{Definition}
\newtheorem{construction}[theorem]{Construction}
\newtheorem{notation}[theorem]{Notation}

\theoremstyle{remark}
\newtheorem{remark}[theorem]{Remark}
\newtheorem{example}[theorem]{Example}

\numberwithin{equation}{subsection}

\def\Z{\mathbb Z}
\def\R{\mathbb R}
\def\C{\mathbb C}
\def\H{\mathbb H}

\def\scl{\textnormal{scl}}

\def\Id{\textnormal{Id}}
\def\PSL{\textnormal{PSL}}
\def\SL{\textnormal{SL}}
\def\til{\widetilde}

\title{Certifying incompressibility of non-injective surfaces with scl}
\author{Danny Calegari}
\address{DPMMS \\ University of Cambridge \\
Cambridge CB3 0WA England}
\email{dcc43@dpmms.cam.ac.uk}
\date{\today}

\begin{abstract}
Cooper--Manning \cite{Cooper_Manning} and Louder \cite{Louder}
gave examples of maps of surface groups to $\PSL(2,\C)$ 
which are not injective, but are incompressible (i.e.\/ no simple
loop is in the kernel).
We construct more examples with very simple {\em certificates} for
their incompressibility arising from the theory of stable commutator length.
\end{abstract}

\maketitle

The purpose of this note is to give examples of maps of closed
surface groups to $\PSL(2,\C)$
which are not $\pi_1$-injective, but are geometrically incompressible, 
in the sense that no simple loop in the surface is in the kernel (in the 
sequel we use the word ``incompressible'' as shorthand for ``geometrically
incompressible''). The examples are very explicit, and the
images can be taken to be all loxodromic.
The significance of such examples is that they
shed light on the simple loop conjecture, which says that any non-injective
map from a closed oriented surface to a $3$-manifold should be compressible.

Examples of such maps were first shown to exist by Cooper--Manning 
\cite{Cooper_Manning}, by a representation variety argument,
thereby answering a question of Minsky \cite{Minsky}
(also see Bowditch \cite{Bowditch}).
More sophisticated examples were then found by Louder \cite{Louder}; 
he even found examples with the property that the minimal self-crossing
number of a loop in the kernel can be taken to be arbitrarily large.
Louder's strategy is to exhibit an explicit finitely presented group
(a limit group) which admits non-injective incompressible surface maps,
and then to observe that such a group can be embedded as an
all-loxodromic subgroup of $\PSL(2,\C)$.

It is easy to produce examples of non-injective surface groups. What is
hard is to certify that they are incompressible. The main point of
our construction, and the main novelty and interest of this paper,
is to show that stable commutator length (and its cousin Gromov--Thurston norm) 
can be used to certify incompressibility.

Our examples are closely related to Louder's examples, although our
certificates are quite different. So another purpose of this note is to
advertise the use of stable commutator length as a tool to get at the
kind of information that is relevant in certain contexts in the theory
of limit groups.

We move back and forward between (fundamental) groups and spaces in the 
usual way. We assume the reader is familiar with stable commutator
length, and Gromov--Thurston norms in dimension 2. Standard references
are \cite{Calegari_scl,Gromov,Thurston}. Computations are done with
the program {\tt scallop}, available from \cite{scallop}.

Recall that if $X$ is a $K(\pi,1)$, the {\em Gromov--Thurston norm} of
a class $\alpha \in H_2(X;\Z)$ (denoted $\|\alpha\|$) 
is the infimum of $-\chi(T)/n$ over
all closed, oriented surfaces $T$ without spherical components mapping to $X$
and representing $n\alpha$. Our certificates for
incompressibility are guaranteed by the following Proposition.

\begin{proposition}[Certificate]\label{certificate_prop}
Let $X$ be a $K(\pi,1)$, and let $\alpha \in H_2(X;\Z)$ be represented
by a closed oriented surface $S$ with no torus or spherical components. If there is
a strict inequality $\|\alpha\| > -\chi(S)-2$ (where $\|\cdot\|$ denotes
Gromov--Thurston norm) then $S$ is (geometrically) incompressible.
\end{proposition}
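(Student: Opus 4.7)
The plan is a proof by contradiction. Suppose $S$ is compressible, so there exists an essential simple closed curve $\gamma$ on some component $S_0$ of $S$ whose image in $X$ is nullhomotopic. The strategy is to produce a new surface $S'$ representing the same class $\alpha$, with $-\chi(S') = -\chi(S) - 2$ and no spherical components; this yields $\|\alpha\| \leq -\chi(S) - 2$, contradicting the hypothesis.

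To construct $S'$, choose a map $D \to X$ of a disk whose boundary maps to $\gamma$. Then compress $S$ along $\gamma$: cut $S_0$ open along $\gamma$ and cap off the two resulting boundary circles with copies of $D$, extending the map to $X$. A direct Euler-characteristic calculation gives $\chi(S') = \chi(S) + 2$, since an annular neighborhood of $\gamma$ (with $\chi = 0$) is replaced by two disks (with total $\chi = 2$). Moreover, the $2$-chain $S' - S$ is supported on this annulus together with the two disks, which fit together into a sphere mapping to $X$. Because $X$ is a $K(\pi,1)$, its second homotopy group vanishes, so this sphere is nullhomotopic and hence nullhomologous; thus $[S'] = [S] = \alpha$ in $H_2(X;\Z)$.

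The main obstacle, and the only genuinely subtle step, is verifying that $S'$ has no spherical components, so that $\|\alpha\| \leq -\chi(S')$ is a legitimate application of the definition of the Gromov--Thurston norm. I would break this into two cases. If $\gamma$ is non-separating on $S_0$, the compression decreases the genus of $S_0$ by one; since $S_0$ is neither a sphere nor a torus by hypothesis, it has genus at least $2$, and the result has genus at least $1$. If $\gamma$ is separating, the compression splits $S_0$ into two closed surfaces; essentiality of $\gamma$ on $S_0$ prevents either side from being a disk, so neither capped piece is a sphere. Torus components may appear in $S'$, but these are permitted in the definition of $\|\cdot\|$ and contribute $0$ to $-\chi(S')$. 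Combining the pieces, $\|\alpha\| \leq -\chi(S') = -\chi(S) - 2$, contradicting the hypothesis. Note that working in a $K(\pi,1)$ rather than a $3$-manifold means the capping disks need not embed, but since we only need the \emph{homology} class to be preserved, $\pi_2(X) = 0$ is enough to close the argument.
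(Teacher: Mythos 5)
Your argument is correct and is exactly the paper's proof: compress along the compressing curve to get a surface $S'$ representing $\alpha$ with no spherical components and $-\chi(S') = -\chi(S)-2 < \|\alpha\|$, contradicting the definition of the norm. The paper states this in one line; you have simply supplied the details it leaves implicit (the Euler characteristic count, the use of $\pi_2(X)=0$ to preserve the homology class, and the case analysis ruling out new sphere components), all of which check out.
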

\begin{proof}
If $S$ is compressible, then $\alpha$ is represented by the result of
compressing $S$, which is a surface $S'$ with no spherical components,
and $-\chi(S') < \|\alpha\|$. But this contradicts the definition of
$\|\alpha\|$.
\end{proof}

On the other hand, a closed surface $S$ without torus or spherical components
representing $\alpha$
and with $-\chi(S) = \|\alpha\|$ {\em is} $\pi_1$-injective, so to apply
our proposition to obtain examples, we must find examples of spaces $X$
and integral homology classes $\alpha$ for which $\|\alpha\|$ is not equal
to $-\chi(S)$ for any closed orientable surface $S$; i.e.\/ for which $\|\alpha\|$
is not in $2\Z$.
Such spaces can never be $3$-manifolds, by combined results of
Gabai and Thurston \cite{Gabai,Thurston}, so our methods will never
directly find a counterexample to the simple loop conjecture.

The groups we consider are all obtained by amalgamating two simpler groups
over a cyclic subgroup. The generator of the cyclic group is homologically
trivial in either factor, giving rise to a class in $H_2$ in the big group.
The Gromov--Thurston norm of this class is related to the stable commutator
length of the loop in the two factors as follows:

\begin{proposition}[Amalgamation]
Let $G$ be an amalgamated product $G=J*_{\langle w \rangle} K$ 
along a cyclic group $\langle w \rangle$
which is generated by a loop $w$ which is homologically trivial on either side.
Let $\phi: H_2(G;\Z) \to H_1(\langle w \rangle;\Z)$ be the connecting map 
in the Mayer--Vietoris sequence, and let $H_w \subset H_2(G;\Z)$ be the affine subspace
mapping to the generator.
If $w$ has infinite order in $J$ and $K$, then 
$$\inf_{\alpha\in H_w} \|\alpha\| = 2(\scl_J(w) + \scl_K(w))$$
\end{proposition}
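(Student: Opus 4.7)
The plan is to prove the two inequalities separately using a geometric model of $BG$. Realize $X := BG$ as the pushout $X_J \cup_C X_K$ of $X_J := BJ$ and $X_K := BK$ along $C \simeq S^1$ glued via the loop $w$ on each side; the infinite-order hypothesis makes the inclusions $\pi_1(C) \hookrightarrow J, K$ injective, so $X$ is indeed a $K(G,1)$. For a closed oriented surface $T \to X$ in general position relative to a bicollar of $C$, the Mayer--Vietoris boundary $\phi([T]) \in H_1(C;\Z) = \Z$ is computed as the total degree of the transverse intersection $L := T \cap C$ over $C$.

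For the upper bound, I would build near-extremal closed surfaces by matched gluing. Using the characterization $\scl_J(w) = \lim_n \textnormal{cl}_J(w^n)/n$, for each large $n$ I pick a surface $S_J \to X_J$ of genus $g_J = \textnormal{cl}_J(w^n)$ with a single boundary component parametrised as the $n$-fold cover of $C$, and similarly $S_K$ with genus $g_K = \textnormal{cl}_K(w^n)$. Gluing $\partial S_J$ to $\partial S_K$ along this common parametrisation produces a closed surface $T \to X$ with $-\chi(T) = 2(g_J + g_K - 1)$ and $\phi([T]) = n$, so $[T] = n\alpha$ for some $\alpha \in H_w$. Dividing by $n$ and letting $n \to \infty$ gives $\inf_{\alpha \in H_w} \|\alpha\| \leq 2(\scl_J(w) + \scl_K(w))$.

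For the lower bound, take any closed oriented $T \to X$ with no sphere components, representing $n\alpha$ for some $\alpha \in H_w$ and some $n \geq 1$. After general position, $L = T \cap C$ has total degree $n$ over $C$. An innermost-disk surgery, valid because essential loops in $C$ remain essential in $X_J$ and $X_K$, lets me assume every component of $L$ is essential in $T$; cutting $T$ along $L$ then produces admissible surfaces $S_J \to X_J$ and $S_K \to X_K$ of degree $n$ for $w$, containing no disk or sphere components. The definition of $\scl$ gives $-\chi(S_J) = -\chi^-(S_J) \geq 2n\,\scl_J(w)$ and likewise for $S_K$, whence $-\chi(T) \geq 2n(\scl_J(w) + \scl_K(w))$ and $\|\alpha\| \geq 2(\scl_J(w) + \scl_K(w))$. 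The delicate step is this innermost-disk reduction, which is what reconciles the $-\chi$ of the Gromov--Thurston norm with the $-\chi^-$ appearing in the definition of $\scl$, preventing the lower bound from being undercut by disk components in the cut pieces.
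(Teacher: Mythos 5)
Your argument is correct, and its overall architecture (geometrize the amalgam as $X_J\cup_C X_K$, cut along the gluing circle for the lower bound, glue relative surfaces for the upper bound) is the same as the paper's. The lower bound is essentially the paper's: it too homotopes the surface so that the preimage of the core of the cylinder consists of essential curves and then cuts. The one divergence is that the paper additionally compresses away intersection curves that map to the core with degree zero, so that every boundary component of the cut pieces covers $w$ nontrivially, whereas you leave such curves in place; your version still closes up, but only because you are implicitly using the convention that an admissible surface need only have nonzero \emph{total} boundary degree over $w$ (\cite{Calegari_scl}, Prop.~2.10) --- under the ``monotone'' convention you would need the paper's extra compression. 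Your upper bound takes a genuinely different and more elementary route: instead of starting from arbitrary near-extremal admissible surfaces and invoking the covering-space argument of \cite{Calegari_scl}, Prop.~2.13 twice to arrange that the multisets of boundary degrees on the two sides agree, you take genus-$\mathrm{cl}_J(w^n)$ and genus-$\mathrm{cl}_K(w^n)$ surfaces each with a single boundary component wrapping $n$ times around $C$, glue them directly (with reversed orientation on one side), and let $n\to\infty$ using $\scl(w)=\lim_n \mathrm{cl}(w^n)/n$. This buys a shorter, self-contained gluing at the cost of resting on the commutator-length characterization of $\scl$ rather than the surface-infimum formulation; the two inputs are equivalent by \cite{Calegari_scl}, Prop.~2.10. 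One small point that you and the paper both pass over: a class $\beta$ with $\phi(\beta)=n$ need not literally equal $n\alpha$ for an \emph{integral} $\alpha\in H_w$ unless $\phi$ is injective (as it is in all the applications in the paper), but this affects neither argument where it is actually used.
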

\begin{proof}
This is not difficult to see directly
from the definition, and it is very similar to the proof of
Theorem~3.4 in \cite{Calegari_surface}. However, for the sake of clarity we give an argument.
Note by the way that the hypothesis that $w$ is homologically trivial on either side
is equivalent to the statement that the inclusion map 
$H_1(\langle w \rangle;\Z) \to H_1(J;\Z)\oplus H_1(K;\Z)$ is the zero map, so $\phi$ as above
is certainly surjective. Moreover, if $H_2(J;\Z)$ and $H_2(K;\Z)$ are trivial (as will often
be the case below), then $\phi$ is an isomorphism, and $H_w$ consists of a single class
$\alpha$.

It is convenient to geometrize this algebraic picture, so let $X_J$ and $X_K$ be Eilenberg-MacLane
spaces for $J$ and $K$, and let $X_G$ be obtained from $X_J$ and $X_K$ by attaching the two ends of
a cylinder $C$ to loops representing the conjugacy classes corresponding to the images of $w$ in
either side. Let $\gamma$ be the core of $C$. If $S$ is a closed, oriented surface with no
sphere components, and $f:S \to X_G$ represents some $n\alpha$ with $\alpha \in H_w$, then
we can homotope $f$ so that it meets $\gamma$ transversely and efficiently --- i.e. so that
$f^{-1}(\gamma)$ consists of pairwise disjoint essential simple curves in $S$. If one of these
curves maps to $\gamma$ with degree zero we can compress $S$ and reduce its complexity, so without
loss of generality every component maps with nonzero degree. Hence we can cut $S$ into $S_J$ and $S_K$
each mapping to $X_J$ and $X_K$ respectively and with boundary representing some finite cover of
$w$. By definition this shows $\inf_{\alpha \in H_w} \|\alpha\| \ge 2(\scl_J(w) + \scl_K(w))$.

Conversely, given surfaces $S_J$ and $S_K$ mapping to $X_J$ and $X_K$ with boundary representing
finite covers of $w$ (or rather its image in each side), we need to construct a suitable $S$ as above.
First, we can pass to a cover of each $S_J$ and $S_K$ in such a way that the boundary of each maps to
$w$ with positive degree; see e.g. \cite{Calegari_scl}, Prop.~2.13. Then we can pass to a further
finite cover of each so that the set of degrees with which components of $\partial S_J$ and 
of $\partial S_K$ map over $w$ are the same (with multiplicity); again, see the argument of 
\cite{Calegari_scl} Prop.~2.13. Once this is done we can glue up $S_J$ to $S_K$ with the opposite
orientation to build a surface $S$ mapping to $X_G$ which, by construction, represents a multiple 
of some $\alpha$ in $H_w$. We therefore obtain $\inf_{\alpha \in H_w} \|\alpha\| \le 2(\scl_J(w) + \scl_K(w))$
and we are done.
\end{proof}

We now show how to use these Propositions to produce examples.

\begin{example}\label{first_example}
Start with a free group; for concreteness, let $F=\langle a,b,c\rangle$.
Consider a word $w\in F$ of the form $w=[a,b][c,v]$ for some $v\in F$.
Associated to this expression of $w$ as a product of two commutators 
is a genus $2$ surface $S$ with one boundary component mapping to
a $K(F,1)$ in such a way that its boundary represents $w$. This surface
is not injective, since the image of its fundamental group is $F$
which has rank $3$. Let $G=\langle a,b,c,x,y \; | \; w=[x,y]\rangle$;
i.e. geometrically a $K(G,1)$ is obtained from a $K(F,1)$ by attaching
the boundary of a once-punctured torus $T$ to $w$. The surface
$R:=S \cup T$ has genus $3$, and represents the generator of $H_2(G;\Z)$.
On the other hand, by the Amalgamation Proposition,
the Gromov--Thurston norm of this homology class
is equal to $2\cdot\scl_{\langle x,y\rangle}([x,y])+2\cdot\scl_F(w)$. Since
$\scl_{\langle x,y\rangle}([x,y])=1/2$ (see e.g.\/ \cite{Calegari_scl} Ex.~2.100),
providing $1/2 < \scl(w)$ the result
is non-injective but incompressible.

The group $G$ can be embedded in $\PSL(2,\C)$ by first embedding $F$
as a discrete subgroup, then embedding $\langle x,y\rangle$ in
such a way that $[x,y] = w$. By conjugating $\langle x,y\rangle$
by a generic loxodromic element with the same axis as $w$, we can ensure
this example is injective, and it can even be taken to be all loxodromic.
This follows in the usual way by a Bass--Serre type argument; see e.g.\/
a similar argument in \cite{Calegari_Dunfield}, Lem.~1.5.

Almost any word $v$ will give rise to $w$ with $\scl(w)>1/2$; for example,
$$\scl([a,b][c,aa])=1$$ 
as can be computed using {\tt scallop}. 
Experimentally, it appears that if $v$ is
chosen to be random of length $n$, then $\scl(w) \to 3/2$ as $n \to \infty$.
For example,
$$\scl ([a,b][c,bcABBcABCbbcACbcBcbb]) = 7/5$$
The closer $\scl(w)$ is to $3/2$, the bigger the index of a cover
in which some simple loop compresses. This gives a practical
method to produce examples for any given $k$ in which no loop with fewer
than $k$ self-crossings is in the kernel.
\end{example}

\begin{example}
Note that the groups $G$ produced in Example~\ref{first_example} are
$1$-relator groups, which are very similar to $3$-manifold groups in some
important ways. A modified construction shows they can in fact be taken
to be $1$-relator fundamental groups of hyperbolic $4$-manifolds. To see this, we consider
examples of the form $G=\langle a,b,c,x_1,y_1,\cdots,x_g,y_g \; | \; w=\prod_{i=1}^g [x_i,y_i]\rangle$
i.e.\/ we attach a once-punctured surface $T_g$ of genus $g$, giving rise to
a noninjective incompressible surface $R=S\cup T_g$ of genus $g+2$.

Let $\langle a,b,c \rangle$ act discretely and faithfully, stabilizing a totally
geodesic $\H^3$ in $\H^4$. We can arrange for the axis $\ell$ 
of $w$ to be disjoint from its translates. 
Thinking of $\langle x_1,y_1,\cdots,x_g,y_g\rangle$ as the fundamental group
of a once-punctured surface $T_g$, we choose a hyperbolic structure on this
surface for which $\partial T_g$ is isometric to $\ell/\langle w\rangle$,
and make this group act by stabilizing a totally geodesic $\H^2$ in $\H^4$
in such a way that the axis of $\partial T_g$
intersects the $\H^3$ perpendicularly along $\ell$. Providing the
shortest essential arc in $T_g$ from $\partial T_g$ to itself is sufficiently
long (depending on the minimal 
distance from $\ell$ to its translates by $\langle a,b,c\rangle$)
the resulting group is discrete and faithful. This
follows by applying the Klein--Maskit combination theorem, once we ensure that the
limit sets of the conjugates of $\langle a,b,c\rangle$ are contained in
regions satisfying the ping-pong hypothesis for the action of $\pi_1(T_g)$. 
This condition can be ensured by taking $g$
big enough and choosing the hyperbolic structure on $T_g$ accordingly; the details
are entirely straightforward.
\end{example}

\begin{example}
Let $H$ be any nonelementary
hyperbolic $2$-generator group which is torsion free but not free.
Let $a,b$ be the generators. Then the once-punctured torus with boundary
$[a,b]$ is not injective. As before, let
$G = \langle H,x,y\; | \; [a,b]=[x,y]\rangle$. Then $G$ contains a genus
$2$ surface representing the amalgamated class in $H_2(G;\Z)$, and the
norm of this class is $1+2\cdot \scl_H([a,b]) > 0$, so this example
is noninjective but incompressible.

As an example, we could take $H$ to be the fundamental group of
a closed hyperbolic $3$-manifold of Heegaard genus $2$, 
or a $2$-bridge knot complement. Such
examples have discrete faithful representations into $\PSL(2,\C)$.
\end{example}


\begin{example}
It is easy to produce examples of $2$-generator 1-relator groups 
$H=\langle a,b \; | \; v \rangle$ in which $1/2 - \epsilon < \scl([a,b]) < 1/2$
for any $\epsilon$. Such groups are torsion-free if $v$ is not a proper
power. Just fix some big integer $N$ and 
take $v$ to be any product of conjugates
$$v = ([a,b]^{\pm N})^{g_1}([a,b]^{\pm N})^{g_2} \cdots ([a,b]^{\pm N})^{g_m}$$
for which there are as many $+N$'s as $-N$'s. Such an $H$
maps to the Seifert-fibered $3$-manifold
group $\langle a,b,z \; | \; [a,b]^N = z^{N-1}, [a,z]=[b,z]=1\rangle$
in which $\scl([a,b])=(N-1)/2N$. The only subtle part of this last equality is the lower
bound, which is certified by Bavard duality (see \cite{Calegari_scl} Thm.~2.70)
and the existence of a rotation quasimorphism associated to a realization of the fundamental
group of the Seifert manifold as a central extension of 
the fundamental group of a hyperbolic torus orbifold with
one orbifold point of order $N$. Since $\scl$ is monotone nonincreasing
under homomorphisms, the claim follows.
\end{example}

I would like to thank Lars Louder, Jason Manning and the anonymous referee for helpful conversations
and suggestions. I would also like to thank Jason for suggesting that I call this 
paper ``scl, SLC and $SL(2,\C)$'' and for understanding why I decided not to. Danny Calegari 
was partly supported by NSF grant DMS 1005246.

\end{document}